\newtheorem{remark}{Remark}
\begin{document}
\title{Multi-marginal optimal transport on Riemannian manifolds\footnote{Y.-H.K. is supported in part by 
Natural Sciences and Engineering
Research Council of Canada (NSERC) Discovery Grants 371642-09 as well as Alfred P. Sloan research fellowship 2012--2014.  B.P. is pleased to acknowledge the support of a University of Alberta start-up grant and National Sciences and Engineering Research Council of Canada Discovery Grant number 412779-2012.}}\author{Young-Heon Kim\footnote{Department of Mathematics, University of British Columbia, Vancouver BC Canada V6T 1Z2 yhkim@math.ubc.ca} and Brendan Pass\footnote{Department of Mathematical and Statistical Sciences, 632 CAB, University of Alberta, Edmonton, Alberta, Canada, T6G 2G1 pass@ualberta.ca.}}
\maketitle
\begin{abstract}
We study a multi-marginal optimal transportation problem on a Riemannian manifold, with cost function given by the average distance squared from multiple points to their barycenter. 
Under a standard regularity condition on the first marginal, we prove that the optimal measure is unique and concentrated on the graph of a function over the first variable, thus inducing a Monge solution.  This result generalizes McCann's polar factorization theorem on manifolds from 2 to several marginals, in the same sense that a well known result of Gangbo and Swiech generalizes Brenier's polar factorization theorem on $\mathbb{R}^n$.
\end{abstract}

\section{Introduction}
In this paper, we study an optimal transport problem with several marginals. Given Borel probability measures $\mu_1,\mu_2,...,\mu_m$, each compactly supported on some smooth manifold $M$, and a cost function $c: M^m \rightarrow \mathbb{R}$, the multi-marginal optimal transport problem of Monge is to minimize

\begin{equation}\tag{\textbf{M}}\label{M}
\int_{M}c(x_1,F_2(x_1),...,F_m(x_1))d\mu_1(x_1)
\end{equation}
among $(m-1)$-tuples of mappings $(F_2,...,F_m)$, such that for each $i$, the map $F_i:M \mapsto M$ pushes the measure $\mu_1$ forward to $\mu_i$; that is, given any Borel $A \subseteq M$, $\mu_1(F_i^{-1}(A)) =\mu_i(A)$.  

The Kantorovich formulation of the multi-marginal optimal transport problem is the minimize
\begin{equation}\tag{\textbf{K}}\label{K}
\int_{M^m}c(x_1,x_2,...,x_m)d\gamma(x_1,x_2,...,x_m)
\end{equation}
among all probability measures $\gamma$ on the product space $M^m$ which project to the $\mu_i$; that is, such that $\gamma(M^{i-1}\times A \times M^{m-i}) = \mu_i(A)$ for all Borel $A \subseteq M$ and all $i=1,2,...,m$. Under reasonable conditions (e.g. continuity of the cost function and compactness of the supports of the measures), it is fairly straightforward to assert the existence of a minimizer $\gamma$ in \eqref{K}. Note that, for any $(F_2,...,F_m)$ satisfying the pushforward constraint in \eqref{M}, the pushforward measure $\gamma$ of $\mu_1$ by the map $(Id,F_2,...,F_M):M \rightarrow M^m$ satisfies the marginal constraint in \eqref{K} and so \eqref{K} is a relaxed version of \eqref{M}.

When $m=2$,  \eqref{K} and \eqref{M} correspond to the Monge and Kantorovich formulations, respectively, of the classical optimal transportation problem.  This problem has been studied extensively over the past 25 years, and has many applications and deep connections to various areas of mathematics.  Under a structural condition on the cost function, that is in particular satisfied by the Riemannian distance squared, and a fairly weak regularity condition on the first marginal, it is now well known that the optimal measure is concentrated on the graph $\{(x_1,F_2(x_1)):x_1 \in M\}$ of a map  $F_2:M \rightarrow M$.  The map $F_2$ minimizes \eqref{M}, and both $F_2$ and $\gamma$ are unique \cite{lev}\cite{G}\cite{GM}\cite{bren}\cite{m3}.

In recent years, the \emph{multi-marginal} case, $m\geq 3$ has attracted increasing attention, due to emerging applications in areas such as economics \cite{CE}\cite{CMN}, physics \cite{CFK}\cite{bdpgg}, cyclical monotonicity  \cite{GG, GhM, GhMa}, and systems of elliptic equations \cite{GhP1}.  In contrast to the two marginal case, the structure of solutions for general cost functions is not well understood.  For costs with certain special properties, however, a number of authors have proven that, like in the two marginal case, the optimal measure $\gamma$ is concentrated on the graph of a function over $x_1$ and is unique \cite{GS}\cite{C}\cite{P}\cite{P1}\cite{P9}.  On the other hand, it is known that for certain other cost functions, the optimal measure $\gamma$ in the $m \geq 3$ case may be concentrated on a set of Hausdorff dimension larger than $n:=$dim$(M)$ and be non unique \cite{CN}\cite{P}.  It remains an open question to determine precisely which costs permit Monge solution and uniqueness results and which do not.

The most important cost in the two marginal case is the quadratic cost, either $c(x_1,x_2) =|x_1-x_2|^2$ on $M \subseteq \mathbb{R}^n$, or $c(x_1,x_2) =d(x_1,x_2)^2$, where $d$ denotes the distance on a Riemannian manifold $M$.  Uniqueness and Monge solution results for these two cost functions constitute famous theorems of Brenier \cite{bren} and McCann \cite{m3}, respectively; these two results are at the heart of optimal transport theory and underlie many of it applications.

The best known result on multi-marginal problems concerns the cost function $\sum_{i \neq j} |x_i-x_j|^2$ on Euclidean space, $M \subseteq \mathbb{R}^n$.  For this cost, Gangbo and Swiech \cite{GS} proved uniqueness and existence of Monge solution, amounting to a generalization of Brenier's theorem from $2$ to $m \geq 3$ marginals.

The purpose of this paper is to generalize McCann's result on manifolds to the multi-marginal case, in the same spirit that Gangbo and Swiech's theorem generalizes the result of Brenier.  Alternatively, we can think of our main result as generalizing Gangbo and Swiech's result to manifolds, in the same sense that McCann's theorem generalizes Brenier's.  We will prove uniqueness and Monge solution results for the multi-marginal problem on a compact Riemannian manifold, with cost function

\begin{equation}\label{bccost}
c(x_1,x_2,...,x_m) = \inf_{y \in M} \sum_{i=1}^m \frac{d^2}{2}(x_i,y).
\end{equation}
 Let us note that, while the cost $\sum_{i \neq j} d(x_i,x_j)^2$ is a direct, algebraic generalization of the Gangbo-Swiech cost, the cost function \eqref{bccost} seems to be more intimately linked with the underlying geometry of the manifold $M$. 
This cost function 
measures the average distance squared between the points  $x_1$, $x_2$, ..., $x_m$ and their barycenter (also known as the Frechet or Karcher mean).   
Alternatively, we can view $(x_1, \cdots, x_m)$ as a point in the Riemannian product $M \times \cdots \times M$ equipped with the product distance $\sqrt{\sum_{i=1}^m \frac{d^2}{2} (x_i, y_i)}$ and the cost function in (\ref{bccost}) measures the distance from $(x_1, \cdots, x_n)$ to the diagonal set   $\{(y, \cdots, y)\}_{y\in M}$. 
Straightforward calculations in \cite{P9} show that the cost functions of Brenier (when $m=2$ on $\mathbb{R}^n$), McCann (when $m=2$ on a Riemannian manifold) and Gangbo-Swiech (for general $m$ on $\mathbb{R}^n$) are all equal to cost \eqref{bccost} in the appropriate settings, and so \eqref{bccost} for $m>2$ on a Riemannian manifold is indeed an extension of these costs.   Furthermore, there is an intimate link between optimal transport with cost \eqref{bccost} and the notion of the barycenter of the measures $\mu_1,\mu_2,...,\mu_2$ as considered by Agueh and Carlier \cite{ac} in the Euclidean case, following the work of Carlier and Ekeland \cite{CE} who considered more a general version of the cost function \eqref{bccost} in economic applications (see cost \eqref{mcost} below).

Our main theorem is the following.
\newtheorem{main}{Theorem}[section]
\begin{main}\label{main}
Assume $\mu_1$ is absolutely continuous with respect to local coordinates.  Then the solution $\gamma$ to \eqref{K} with cost function \eqref{bccost} is concentrated on the graph of a mapping $(F_2,F_3,...,F_m)$ over the first variable.  This  mapping is a solution to Monge's problem \eqref{M}, and the solutions to both \eqref{K} and \eqref{M} are unique.
\end{main}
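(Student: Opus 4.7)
The plan is to adapt the dual-potential strategy of Gangbo--Swiech and McCann to the manifold barycenter cost.  First, multi-marginal Kantorovich duality on the compact product $M^m$ yields functions $u_1,\dots,u_m$ on $M$ with $\sum_i u_i(x_i)\le c(x_1,\dots,x_m)$ everywhere on $M^m$ and equality on $\mathrm{spt}(\gamma)$.  Because each $\tfrac12 d^2(\cdot,y)$ is uniformly semiconcave, the infimal definition of $c$ together with standard $c$-conjugation makes the $u_i$ locally Lipschitz; absolute continuity of $\mu_1$ plus Rademacher's theorem then gives that $u_1$ is differentiable $\mu_1$-a.e.

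The heart of the argument is a pointwise analysis at any $(x_1,\dots,x_m)\in\mathrm{spt}(\gamma)$ at which $u_1$ is differentiable.  Pick a barycenter $y$ of $(x_1,\dots,x_m)$, so $c(x_1,\dots,x_m)=\sum_{i=1}^m \tfrac12 d^2(x_i,y)$.  Combining equality on $\mathrm{spt}(\gamma)$ with the pointwise inequality at $(x_1',x_2,\dots,x_m)$ (and the bound $c(x_1',x_2,\dots,x_m)\le \tfrac12 d^2(x_1',y)+\sum_{i\ge 2}\tfrac12 d^2(x_i,y)$) yields
\begin{equation*}
u_1(x_1)-\tfrac12 d^2(x_1,y)\;\ge\;u_1(x_1')-\tfrac12 d^2(x_1',y),
\end{equation*}
so $x_1$ maximizes $z\mapsto u_1(z)-\tfrac12 d^2(z,y)$ on $M$.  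The first-order condition and the Riemannian identity $\nabla_z \tfrac12 d^2(z,y)=-\exp_z^{-1}(y)$ then force $\nabla u_1(x_1)=-\exp_{x_1}^{-1}(y)$, i.e.\ $y=\exp_{x_1}(-\nabla u_1(x_1))$.  Thus the barycenter $y=y(x_1)$ is both $\gamma$-a.e.\ unique and determined entirely by $x_1$.  Repeating the same variation in the $i$-th slot shows each $x_i$ ($i\ge 2$) maximizes the two-marginal functional $z\mapsto u_i(z)-\tfrac12 d^2(z,y(x_1))$.

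To extract the Monge maps, I would push $\mu_1$ forward by $x_1\mapsto y(x_1)$ to obtain a measure $\nu$, identified as in \cite{P9} with the Riemannian analogue of the Agueh--Carlier Wasserstein barycenter of $\mu_1,\dots,\mu_m$.  Applying McCann's two-marginal theorem to each pair $(\nu,\mu_i)$ gives a unique maximizer $x_i=F_i(x_1)$ of the $i$-th variational problem for $\mu_1$-a.e.\ $x_1$, furnishing the graph structure.  Uniqueness of $\gamma$ and of the tuple $(F_2,\dots,F_m)$ follows by the standard convex-combination argument: if $\gamma$ and $\gamma'$ are both optimal, so is their half-sum, and its graphical structure forces $\gamma=\gamma'$.

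I expect the main obstacle to be the regularity of the barycenter measure $\nu$ needed to invoke McCann's theorem in the $i\ge 2$ step: one must show $\nu$ has enough regularity (absolute continuity, or at least that it does not charge the singular set of each $u_i$'s $c$-subdifferential) so that the $i$-th maximizer is $\nu$-a.e.\ single-valued.  Intertwined with this is the manifold-level complication of the cut locus: the identity $y(x_1)=\exp_{x_1}(-\nabla u_1(x_1))$ presumes that $-\nabla u_1(x_1)$ lies in the injectivity domain of $\exp_{x_1}$ at $\mu_1$-a.e.\ $x_1$, which I would verify by noting that any minimizer of $\sum_i \tfrac12 d^2(x_i,\cdot)$ generically cannot lie in the cut locus of each $x_i$.
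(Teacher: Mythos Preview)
Your approach is correct up to the point where you determine the barycenter $y=y(x_1)$ from $x_1$ via the first-order condition; this matches the paper's Lemmas~3.3--3.4. The genuine gap is exactly the one you flag: to pass from $y$ to $(x_2,\dots,x_m)$ you invoke McCann's theorem for the pairs $(\nu,\mu_i)$, but this requires that $\nu$ not charge the non-differentiability set of $u_i^c$, and absolute continuity of the barycenter measure $\nu$ on a general compact Riemannian manifold is \emph{not known} (the paper states this explicitly; it is only known on $\mathbb{R}^n$ via Agueh--Carlier and on Hadamard manifolds). So as written, the $i\ge 2$ step cannot be completed, and the proposal does not close.

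The paper bypasses the regularity of $\nu$ with a different key lemma (Lemma~\ref{uniquex_i}): if $x,\bar x\in\mathrm{spt}(\gamma)$ share a common barycenter $y$, then $x=\bar x$. The argument is combinatorial rather than measure-theoretic. Swap the $i$-th coordinates to form $x',\bar x'$; the infimal definition of $c$ with the common $y$ gives $c(x')+c(\bar x')\le c(x)+c(\bar x)$, while $c$-monotonicity of $\mathrm{spt}(\gamma)$ gives the reverse. Equality forces $y$ to also be a barycenter of $x'$, and comparing the first-order conditions at $y$ (legitimate since Lemma~\ref{nocut} puts $y$ away from every cut locus) yields $\nabla_y\tfrac12 d^2(x_i,y)=\nabla_y\tfrac12 d^2(\bar x_i,y)$, hence $x_i=\bar x_i$. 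This directly gives the Monge structure with no hypothesis on $\nu$. Notably, in Section~5 the paper does recover your desired factorization $F_i=G_i\circ G_1^{-1}$ with $G_i=\exp_y(\nabla u_i^c(y))$, but the $\nu$-a.e.\ differentiability of $u_i^c$ is again proved \emph{via} Lemma~\ref{uniquex_i} (it forces $\partial u_i^c(y)$ to have a single extreme point for $\nu$-a.e.\ $y$), not via regularity of $\nu$.
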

This theorem is, to the best of our knowledge, the first of its kind for multi-marginal problems on manifolds with topology different from $\mathbb{R}^n$. We also note that one can extend Theorem \ref{main} to slightly more general costs, similar to \eqref{bccost}: see Theorem \ref{extension}. 

The Gangbo-Swiech optimal maps are all compositions of Brenier maps, $\nabla u_i^*\circ \nabla u_1$, where the $u_i^*$ and $u_1$ are convex functions; this structure was further clarified by Agueh and Carlier \cite{ac},  who showed that the first map, $\nabla u_1$, is the optimal map pushing $\mu_1$ forward to the Wasserstein \emph{barycenter} (see \eqref{matching}) of the measures $\mu_1,...\mu_m$, while the second map $\nabla u_i^*$ is the optimal mapping pushing the   barycenter forward to the measure $\mu_i$ \cite{ac}.  Our optimizers here take the same form, but using McCann maps rather than Brenier maps to account for the curved geometry: the optimizers are of the form 
 \begin{equation}\label{structure}
F_i (x_1) = \exp_{\exp_{x_1}(\nabla u_1(x_1))}(\nabla u_i^c(\exp_{x_1}(\nabla u_1(x_1))))
\end{equation}
for $\frac{d^2}{2}$-concave $u_1$, $u_i^c$; that is, they are compositions of McCann maps (see Proposition \ref{form}).  Therefore, our optimal map is built out of McCann maps, in the same sense that the Gangbo-Swiech maps are built from Brenier maps.

Let us mention that multi-marginal problems with cost functions of the general form 

\begin{equation}\label{mcost}
c(x_1,x_2,...,x_m) = \inf_{y}\sum_{i=1}^mc_i(x_i,y)
\end{equation}
arise naturally in matching problems in economics as considered by Carlier and Ekeland \cite{CE} and 
Chiapporri, McCann and Nesheim.
 \cite{CMN}. In this setting, solving the multi-marginal problem can be interpreted as finding an equilibrium assignment of $m$ different groups of workers to contracts $y$.

Motivated in part by this application, problems of costs of the form \eqref{mcost} were studied by one of us in a recent paper \cite{P9}, and conditions on the $c_i$'s were found that ensured existence and uniqueness of Monge solutions \cite{P9}.  However, that paper was restricted to Euclidean space, $M \subseteq \mathbb{R}^n$, and required twist and a type of non-degeneracy conditions on the cost functions $c_i$;  topological obstructions prevent these hypotheses from holding on, for example, a compact manifold.  It should be noted, however, that the results in \cite{P9} do apply to the quadratic cost \label{bccost} on bounded domains in a Hadamard manifold (a simply connected Riemannian manifold with nonpositive sectional curvature); in this case, the Cartan-Hadamard Theorem ensures that $M$ has the topology of $\mathbb{R}^n$.

Our method for the proof of Theorem \ref{main} is a bit different from the  approach in \cite{P9}, which has its origin in the papers of Carlier and Ekeland \cite{CE} and Agueh and Carlier \cite{ac}.  In  that work, it was crucial to have  the absolute continuity of the (generalized) barycenter measure with respect to local coordinates, since the optimal maps $F_i$'s were constructed from Brenier theory applied to  optimal maps from the barycenter measure, say $\nu$ (see \eqref{matching}), to the target measure $\mu_i$, whose existence requires absolute continuity of the source distribution $\nu$. In the Euclidean setting, the absolute continuity of $\nu$ has been shown by Agueh and Carlier \cite{ac}, but its Riemannian extension is still not known except the case of nonpositively curved manifold \cite{P9}.  In our method, we directly (without relying on absolute continuity of $\nu$) show that the solution $\gamma$ to \eqref{K} is concentrated on a graph of map. First, we will show that for $\mu_1$ almost every $x_1$, there is a unique point $y$ attaining the minimum in \eqref{bccost}, for $x_2,...,x_m$ such that $(x_1,x_2,...,x_m)$ is in the support of the optimal measure $\gamma$ in \eqref{K}.  We will then show, in turn, that for each $y$, there is at most $1$ corresponding point $(x_1,...,x_m)$ in the support of $\gamma$; together, these two facts will imply the main result. It is also remarkable that the structure of the maps $F_i$ as in \eqref{structure} has been obtained without relying on absolute continuity of $\nu$, while the maps $\exp_{ y} (\nabla u_i (y)$ in \eqref{structure} are Brenier maps from $\nu$ to $\mu_i$.

In Section 2, we recall the dual problem to \eqref{K} and some of its properties.  In the third section, we will prove several lemmas which are needed for the proof of Theorem \ref{main}, while the Theorem itself will be proved in Section 4.  In Section 5, we discuss the connection of \eqref{K} and \eqref{M} with the Wasserstein barycenter of the measures $\mu_1,...,\mu_m$, showing the structure \eqref{structure} of the maps $F_i$.  In the final section, we extend our results to other functions of the distance on a Riemannian manifold.

\section{Duality}
Here we recall known results on the Kantorovich dual problem, and make some remarks on the properties of its solution which will be relevant later on.  
 
The dual problem to \eqref{K} is to minimize,
\begin{equation}\tag{\textbf{D}}\label{D}
\sum_i^m\int_M u_i(x_i) d\mu_i(x_i)
\end{equation}
among all $m$-tuples of functions $(u_1,u_2,...,u_m)$ of functions, with $u_i \in L^1(\mu_i)$ and 
\begin{align}\label{eq:compatibility}
\sum_{i=1}^mu_i(x_i) \leq c(x_1,x_2,...,x_m)
\end{align}
 for $\otimes_{i=1}^m\mu_i$ almost everywhere point $(x_1, ... x_m)$.  We will say that an $m$-tuple $(u_1,u_2,...,u_m)$ is \textit{$c$-conjugate} if, for all $i=1,2,...,m$, we have
\begin{equation}\label{eq:conj}
u_i(x_i) = \inf_{x_j \in M, j \neq i}\big[c(x_1,x_2,...,x_m)-\sum_{j \neq i}u_j(x_j)\big].
\end{equation}
It is clear that any $c$-conjugate $(u_1,u_2,...u_m)$ satisfies  (\ref{eq:compatibility}),
 so is a viable competitor in \eqref{D}.

The following result is well known:

\newtheorem{dual}{Theorem}[section]\label{th:dual}
\begin{dual}
There exists a $c$-conjugate solution $(u_1,u_2,...,u_m)$ to \eqref{D}.  If $\gamma$ is any optimal measure in the Kantorovich problem, we have
\begin{equation*}
\sum_{i=1}^mu_i(x_i) = c(x_1,x_2,...,x_m)
\end{equation*}
$\gamma$-almost everywhere.

\end{dual}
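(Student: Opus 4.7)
The plan is to follow the classical Kantorovich duality argument, adapted to $m$ marginals. Two assertions need to be established: existence of a $c$-conjugate optimizer for (\ref{D}), and the pointwise saturation $\sum_i u_i(x_i) = c(x_1, \ldots, x_m)$ on the support of any optimal $\gamma$.

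For the existence part, I would start with a nearly optimizing sequence $(u_1^k, \ldots, u_m^k)$ for (\ref{D}) and iteratively apply the $c$-transform appearing in (\ref{eq:conj}): first replace $u_1^k$ by
$$
\tilde u_1^k(x_1) := \inf_{x_2, \ldots, x_m} \Bigl[c(x_1, x_2, \ldots, x_m) - \sum_{j \geq 2} u_j^k(x_j)\Bigr],
$$
then replace $u_2^k$ by the analogous $c$-transform using $(\tilde u_1^k, u_3^k, \ldots, u_m^k)$, and so on through all indices. Each such replacement preserves (\ref{eq:compatibility}) by the definition of the infimum and does not worsen the dual value, since one checks $u_i^k \leq \tilde u_i^k$ after each swap. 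Iterating the sweep yields a $c$-conjugate tuple in the sense of (\ref{eq:conj}). Because $c$ is continuous on the compact space $M^m$, hence uniformly continuous, each $c$-transform inherits a modulus of continuity from $c$, giving equicontinuity of the sequence. Exploiting the invariance of the constraints and the objective under shifts $(u_1, \ldots, u_m) \mapsto (u_1 + a_1, \ldots, u_m + a_m)$ with $\sum_i a_i = 0$, I would normalize so that $u_i^k(x_i^*) = 0$ at chosen base points $x_i^* \in M$ for $i \geq 2$; feasibility (\ref{eq:compatibility}) then forces a uniform bound on $u_1^k$, and conjugacy transfers the bound to the remaining components. Arzel\`a--Ascoli on the compact manifold yields uniform subsequential limits $(u_1, \ldots, u_m)$, and uniform convergence passes through both the integrals and the $c$-conjugation formula, so the limit is $c$-conjugate and optimal for (\ref{D}).

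For the second assertion, I would invoke the no-duality-gap identity $\int_{M^m} c \, d\gamma = \sum_{i=1}^m \int_M u_i \, d\mu_i$, which in this continuous-cost compact setting is a standard consequence of Fenchel--Rockafellar duality on $C(M^m)$. Using the marginal constraints this rewrites as
$$
\int_{M^m} \Bigl[c(x_1, \ldots, x_m) - \sum_{i=1}^m u_i(x_i)\Bigr] d\gamma = 0.
$$
Since $(u_1, \ldots, u_m)$ is $c$-conjugate, (\ref{eq:compatibility}) holds pointwise everywhere on $M^m$ (not just $\otimes_i \mu_i$-a.e.), so the integrand above is nonnegative everywhere and must therefore vanish $\gamma$-a.e.

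The main obstacle, in my view, is the compactness/normalization step: one must verify that iterating the $c$-transforms and shifting by constants genuinely produces a uniformly bounded, equicontinuous family on the compact manifold, and that the limiting tuple remains $c$-conjugate rather than merely feasible. The no-gap assertion itself, while essential, is by now routine for a continuous cost on a compact product and can alternatively be obtained via discretization and passage to a limit.
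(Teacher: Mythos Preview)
The paper does not supply a proof of this statement: it is simply introduced with ``The following result is well known'' and then stated without argument or specific citation. Your proposal is therefore not competing with any proof in the paper; it is the standard Kantorovich-duality argument (double/iterated $c$-transforms for conjugacy, Arzel\`a--Ascoli compactness on a compact space, no-gap plus nonnegativity of $c-\sum_i u_i$ for the saturation assertion), and it is correct in outline.

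One small remark worth tightening: a single sweep of $c$-transforms through the $m$ indices need not produce a $c$-conjugate tuple when $m\ge 3$, since by the time you have updated $u_m$ the earlier conjugacy relation for $u_1$ used the old $u_2,\ldots,u_m$. The clean fix, which your write-up gestures at, is to first pass to an optimal continuous tuple via Arzel\`a--Ascoli and then perform one further sweep; optimality forces each $c$-transform to coincide with the original (at least $\mu_i$-a.e., and then everywhere after one more pass using continuity), yielding genuine $c$-conjugacy. Also note that the paper writes ``minimize'' for \eqref{D}, which is evidently a typo for ``maximize''; your argument implicitly (and correctly) treats it as a maximization.
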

It is also well known that if the cost $c$ is Lipschitz and $(u_1,u_2,...,u_m)$ is a $c$-conjugate $m$-tuple such that  if $u_1$ is not identically infinity, then $u_1$ must be Lipschitz \cite{m3}.  Our cost function \eqref{bccost} is Lipschitz, since it is defined as the infimum of a family of Lipschitz functions 
$(x_1, \cdots, x_m) \mapsto \sum_{i=1}^m \frac{d^2}{2}(x_i, y).$

\section{Properties of the cost function and consequences of optimality of $\gamma$}
 In this section, we establish several key lemmas for the proof of Theorem \ref{main}.
Throughout this section,  $c(x_1, ..., x_m)$  is the cost function  as in \eqref{bccost},  $\gamma$ denotes an optimal measure of the Kantorovich problem \eqref{K},  and $(u_1, ... , u_m)$ is a $c$-conjugate solution to \eqref{D}.  
\newtheorem{nocut}{Lemma}[section]
\begin{nocut}\label{nocut}
Fix $(x_1,...,x_m)$.  Then any $y$ which minimizes $y \mapsto \sum_{i=1}^m d^2(x_i,y)$  is not in the cut locus of $x_i$ for any $i$.
\end{nocut}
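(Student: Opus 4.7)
The plan is to exploit the semi-concavity of the squared distance function together with first-order optimality. Recall that for each fixed $x \in M$, the function $f_x(y) := d^2(x,y)$ is locally semi-concave on $M$, and is differentiable at $y$ precisely when there is a unique minimizing geodesic from $x$ to $y$, equivalently when $y \notin \mathrm{cut}(x)$.

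The first step is to observe that $G := \sum_{i=1}^m f_{x_i}$ is itself semi-concave and that any semi-concave function attaining a local minimum at $y$ must be differentiable at $y$. Indeed, writing $G = H - C|\cdot|^2$ in a local chart with $H$ concave, the minimum condition $G(y') \geq G(y)$ combined with the concavity upper bound $H(y') \leq H(y) + \langle p, y'-y\rangle$ valid for every $p \in \partial^+ H(y)$ forces $p = 2Cy$. Thus $\partial^+ H(y) = \{2Cy\}$ is a singleton, so $H$ and therefore $G$ is differentiable at $y$, with vanishing gradient.

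The second step is to pass from differentiability of $G$ to differentiability of each summand. Directly from the definition of super-differential, $\partial^+ G(y) \supseteq \sum_{i=1}^m \partial^+ f_{x_i}(y)$. Since $\partial^+ G(y)$ is a singleton by the previous step and each $\partial^+ f_{x_i}(y)$ is non-empty by semi-concavity, the elementary observation that a Minkowski sum of non-empty convex sets can equal a singleton only when each summand is itself a singleton forces each $\partial^+ f_{x_i}(y)$ to be a singleton. Hence each $f_{x_i}$ is differentiable at $y$, which by the characterization recalled above means $y \notin \mathrm{cut}(x_i)$ for every $i$.

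The main obstacle is conceptual rather than computational, namely pinning down the correct notion of cut locus. Under the convention that $\mathrm{cut}(x)$ is the set of points at which $f_x$ fails to be $C^1$, equivalently the set of points admitting more than one minimizing geodesic from $x$, the semi-concavity/Minkowski argument above is clean and complete. If instead one adopts a broader definition of the cut locus that includes first conjugate points along a unique minimizing geodesic, where $f_x$ is still $C^1$ but fails to be $C^2$, then excluding such points requires a supplementary argument using Jacobi-field variations that produce a super-linear shortcut in the direction of the degenerate Jacobi field.
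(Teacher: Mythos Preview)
Your first-order, convex-analytic argument is correct and is genuinely different from the paper's second-order variational proof. The paper argues by contradiction: if $y\in\mathrm{cut}(x_i)$, then by Proposition~2.5 of Cordero-Erausquin--McCann--Schmuckenschl\"ager one can find a direction $u\in T_yM$ along which the symmetric second difference of $d^2(x_i,\cdot)$ is arbitrarily negative, while Lemma~3.12 of the same paper bounds the second differences of the remaining $d^2(x_j,\cdot)$ uniformly from above; summing forces $\sum_j d^2(x_j,\cdot)$ to decrease in the direction $u$ or $-u$, contradicting minimality. You instead observe that $G=\sum_j d^2(x_j,\cdot)$ is semi-concave, that a semi-concave function is differentiable at any local minimum, and that the containment $\sum_j \partial^+ f_{x_j}(y)\subseteq \partial^+ G(y)=\{0\}$ together with non-emptiness of each summand forces each $\partial^+ f_{x_j}(y)$ to be a singleton. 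This is clean and self-contained: it needs only semi-concavity of $d^2(x,\cdot)$ and elementary properties of superdifferentials, not the sharper second-order estimates from \cite{c-ems}.

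The trade-off is exactly the one you flag. Your argument shows that each $d^2(x_i,\cdot)$ is differentiable at $y$, equivalently that there is a unique minimizing geodesic from $x_i$ to $y$; it does not by itself exclude the possibility that $y$ is a first conjugate point along that unique geodesic, which the standard definition of cut locus includes. The paper's use of Proposition~2.5 in \cite{c-ems} covers that case as well, since the proposition applies to the entire cut locus (its proof is precisely the Jacobi-field variation you allude to in your last paragraph). That said, a glance at how Lemma~\ref{nocut} is used downstream---in Lemmas~\ref{uniquey} and~\ref{uniquex_i}---shows that only differentiability of $d^2(x_i,\cdot)$ at $y$ and the identity $x_i=\exp_y\bigl(-\nabla_y \tfrac{1}{2}d^2(x_i,y)\bigr)$ are ever invoked, and both follow from uniqueness of the minimizing geodesic. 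So your weaker conclusion already suffices for the rest of the paper; the full statement as written requires the supplementary conjugate-point argument, which is precisely what the paper's proof supplies via the cited second-order estimate.
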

\begin{proof}
Choose a point $ y $ in the cut locus of $x_i$ for some $i$; we will show that $y$ cannot minimize $y \mapsto \sum_{i=1}^m d^2(x_i,y)$.  By Lemma 3.12 in \cite{c-ems}, we can find a constant $K$ such that, for all $u \in T_yM$, and  $j=1,2,...m$, we have

\begin{equation}
\frac{d^2(x_j, \exp_yu) + d^2(x_j, \exp_y(-u)) -2d^2(x_j,y)}{|u|^2} \leq K.
\end{equation}
On the other hand, by Proposition 2.5 in the same paper, we can find some non zero $u \in T_yM$ such that 

\begin{equation}
\frac{d^2(x_i, \exp_yu) + d^2(x_i, \exp_y(-u)) -2d^2(x_i,y)}{|u|^2} \leq -mK.
\end{equation}
Therefore, we have

\begin{eqnarray*}
\sum_{j=1}^md^2(x_j,y) &=& \sum_{j \neq i}^md^2(x_j,y) +d^2(x_i,y)\\
& \geq & \frac{-(m-1)K|u|^2}{2} + \frac{1}{2}\sum_{j \neq i}^m \big(d^2(x_j, \exp_yu) + d^2(x_j, \exp_y(-u))\big)\\
& +& \frac{mK|u|^2}{2} + \frac{1}{2}d^2(x_i, \exp_yu) + d^2(x_i, \exp_y(-u))\\
& >& \frac{1}{2}\sum_{j=1}^md^2(x_j, \exp_yu) + \frac{1}{2}\sum_{j=1}^md^2(x_j, \exp_y(-u)).
\end{eqnarray*}
Therefore, either 

\begin{equation*}
\sum_{j=1}^md^2(x_j, \exp_yu) <\sum_{j=1}^md^2(x_j,y),
\end{equation*}
or 
\begin{equation*}
\sum_{j=1}^m d^2(x_j, \exp_y-u) <\sum_{j=1}^m d^2(x_j,y);
\end{equation*}
in either case, $y$ cannot minimize  $\sum_{j=1}^m d^2(x_j,y)$.
\end{proof}

\newtheorem{semiccost}[nocut]{Lemma}
\begin{semiccost}
The cost function $c$ is everywhere superdifferentiable with respect to $x_1$.  That is, for all $(x_1,x_2,...,x_m) \in M^m$ there exist $p \in T_{x_1}M$ (the supergradient) such that, for small $v \in T_{x_1}M$, we have
\begin{equation*}
c(\exp_{x_1}v, x_2,...,x_m) \leq c(x_1, x_2,...,x_m) +g(p,v) + o(|v|),
\end{equation*}
where $g$ denotes the metric.
\end{semiccost}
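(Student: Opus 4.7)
The plan is to extract the supergradient from the envelope structure of the cost. Fix $(x_1,\ldots,x_m)$; since $M$ is compact and $y\mapsto \sum_{i=1}^m \frac{d^2}{2}(x_i,y)$ is continuous, the infimum in \eqref{bccost} is attained at some $\bar y\in M$, so that $c(x_1,\ldots,x_m)=\sum_{i=1}^m \frac{d^2}{2}(x_i,\bar y)$. The first, and crucial, step is to invoke Lemma \ref{nocut}, which forces $\bar y$ to lie outside the cut locus of $x_1$. Consequently, the function $x\mapsto \frac{d^2}{2}(x,\bar y)$ is smooth in a neighborhood of $x_1$, with gradient $-\exp_{x_1}^{-1}(\bar y)$ at $x_1$.

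With the cut-locus obstruction removed, the remainder is a standard envelope argument. Since $\bar y$ is merely a competitor (not necessarily a minimizer) in the infimum defining $c$ at the perturbed point $(\exp_{x_1}v,x_2,\ldots,x_m)$, we have
\[
c(\exp_{x_1}v,x_2,\ldots,x_m) \le \frac{d^2}{2}(\exp_{x_1}v,\bar y) + \sum_{i=2}^{m}\frac{d^2}{2}(x_i,\bar y).
\]
Subtracting $c(x_1,\ldots,x_m)=\sum_{i=1}^m \frac{d^2}{2}(x_i,\bar y)$ and applying the first-order Taylor expansion to the smooth function $x\mapsto \frac{d^2}{2}(x,\bar y)$ at $x_1$ yields
\[
c(\exp_{x_1}v,x_2,\ldots,x_m) \le c(x_1,\ldots,x_m) + g\bigl(-\exp_{x_1}^{-1}(\bar y),\, v\bigr) + O(|v|^2),
\]
which is precisely the desired inequality with supergradient $p=-\exp_{x_1}^{-1}(\bar y)$.

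The only genuine obstacle would have been the possibility of $\bar y$ lying in the cut locus of $x_1$: in that case $\frac{d^2}{2}(\cdot,\bar y)$ is only semiconcave at $x_1$ and the clean linear expansion above is unavailable, so one could not immediately read off a linear upper bound with a specific supergradient. Lemma \ref{nocut} is tailored precisely to rule this out, which explains its placement just before this statement; once it is in hand, the proof is a short envelope computation and requires no further geometric input.
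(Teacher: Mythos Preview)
Your argument is correct, but it differs from the paper's in one notable respect. The paper does \emph{not} invoke Lemma \ref{nocut} here at all; instead it quotes McCann's result (Proposition 6 in \cite{m3}) that $x_1\mapsto d^2(x_1,y)$ is superdifferentiable \emph{everywhere}, cut locus included, by semiconcavity. It then runs the same envelope comparison you wrote, using an arbitrary supergradient $p$ of $\frac{d^2}{2}(\cdot,\bar y)$ at $x_1$. Your route via Lemma \ref{nocut} trades that citation for smoothness of $\frac{d^2}{2}(\cdot,\bar y)$ near $x_1$, which buys you an explicit supergradient $p=-\exp_{x_1}^{-1}(\bar y)$ and an $O(|v|^2)$ remainder rather than $o(|v|)$.

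One correction to your closing commentary: the cut-locus case would \emph{not} actually be a genuine obstacle. Semiconcavity of $\frac{d^2}{2}(\cdot,\bar y)$ already guarantees a nonempty superdifferential at every point, so a linear upper bound with \emph{some} supergradient is always available; this is precisely what the paper exploits. What you would lose at the cut locus is uniqueness of the supergradient and the $O(|v|^2)$ remainder, not superdifferentiability itself. So Lemma \ref{nocut} is convenient for your version but not essential for the statement.
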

\begin{proof}
Choose $y$ minimizing $y \mapsto \sum_i \frac{d^2}{2}^2(x_i,y)$.  By \cite{m3}, Proposition 6, the function $x_1 \mapsto d^2(x_1,y)$ is superdifferentiable.  Letting $p$ be it's supergradient, we have, for small $v$

\begin{equation*}
\frac{d^2}{2}^2(\exp_{x_1}v,y) \leq \frac{d^2}{2}(x_1,y) + g(p,v) + o(|v|).
\end{equation*}

Now, by definition, we have

\begin{eqnarray*}
c(\exp_{x_1}v, x_2,...,x_m) &\leq & \frac{d^2}{2}^2(exp_{x_1}v,y)+  \sum_{i=2}^m  \frac{d^2}{2}(x_i,y)\\
&\leq & \frac{d^2}{2}(x_1,y) + g(p,v) + o(|v|) +\sum_{i=2}^m  \frac{d^2}{2}^2(x_i,y)\\
&=&c(x_1, x_2,...,x_m)+ g(p,v) + o(|v|).
\end{eqnarray*}
Therefore, $c$ is superdifferentiable with respect to $x_1$, with supergradient $p$.
\end{proof}
\newtheorem{uniquey}[nocut]{Lemma}
\begin{uniquey}\label{uniquey}
At any point $(x_1,...,x_m)$ where $c$ is differentiable with respect to $x_1$, there is a unique minimizing $y$ in \eqref{bccost},  and moreover, 
\begin{align}\label{eq:y}
 y=\exp_{x_1}(\nabla _{x_1} c(x_1,...,x_m)).
\end{align}
\end{uniquey}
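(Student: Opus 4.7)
The plan is to read off the formula \eqref{eq:y} and the uniqueness of the minimizer $y$ directly from the proofs of the two preceding lemmas, using the differentiability hypothesis to collapse the superdifferential of $c(\cdot,x_2,\ldots,x_m)$ at $x_1$ to a single point. First, I would note that the proof of Lemma \ref{semiccost} actually produces, for each minimizer $y$ of $y \mapsto \sum_i \tfrac{d^2}{2}(x_i,y)$, a specific supergradient of the cost $c(\cdot,x_2,\ldots,x_m)$ at $x_1$, namely the supergradient $p_y$ at $x_1$ of the function $x_1 \mapsto \tfrac{d^2(x_1,y)}{2}$.

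Next, I would invoke Lemma \ref{nocut} to upgrade this observation: every minimizer $y$ lies outside the cut locus of $x_1$, so the function $x_1 \mapsto \tfrac{d^2(x_1,y)}{2}$ is in fact smooth near $x_1$, and its gradient is given by the classical Riemannian identity in terms of $\exp_{x_1}^{-1}(y)$. Thus each $p_y$ is the actual gradient of $\tfrac{d^2(\cdot,y)}{2}$ at $x_1$. Since $c$ is differentiable with respect to $x_1$ at $(x_1,\ldots,x_m)$ by assumption, its superdifferential in $x_1$ at this point is the singleton $\{\nabla_{x_1} c\}$, so $p_y = \nabla_{x_1} c$ for every minimizer $y$. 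Consistently with the sign convention of Lemma \ref{semiccost}, this reads $\exp_{x_1}^{-1}(y) = \nabla_{x_1} c(x_1,\ldots,x_m)$.

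Finally, because $y$ lies outside the cut locus of $x_1$, the minimizing geodesic from $x_1$ to $y$ is uniquely determined by its initial velocity, so $y$ is uniquely recovered from the tangent vector $\exp_{x_1}^{-1}(y)$ via the exponential map. Combining this with the previous step, every minimizer $y$ equals $\exp_{x_1}(\nabla_{x_1} c(x_1,\ldots,x_m))$, giving simultaneously uniqueness and the explicit formula \eqref{eq:y}.

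I do not anticipate a serious obstacle; the argument is essentially bookkeeping built on top of Lemmas \ref{nocut} and \ref{semiccost}. The only delicate point is to verify that the supergradient constructed in the proof of Lemma \ref{semiccost} for a given minimizer $y$ coincides, on the non-cut-locus set, with the true Riemannian gradient of $\tfrac{d^2(\cdot,y)}{2}$; this is where the non-cut-locus conclusion of Lemma \ref{nocut} is indispensable, since otherwise the supergradient need not determine $y$ uniquely.
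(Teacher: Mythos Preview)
Your proposal is correct and follows essentially the same route as the paper: use Lemma \ref{nocut} to place any minimizer $y$ outside $\mathrm{cut}(x_1)$, identify the (now genuine) gradient $\nabla_{x_1}\tfrac{1}{2}d^2(x_1,y)$ with $\nabla_{x_1}c$ via the supergradient construction of Lemma \ref{semiccost} together with the differentiability hypothesis, and then invert via the exponential map. The paper's proof is simply a terser rendering of the same argument, asserting the gradient identity directly rather than routing it explicitly through the superdifferential.
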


\begin{proof}
For any minimizing $y$ in \eqref{bccost}, $d^2(x_1,y)$ is differentiable as $y \notin cut(x_1)$ by Lemma \ref{nocut}. We then have 
\begin{equation*}\label{firstorder}
\nabla _{x_1}c(x_1,...,x_m)  = \nabla _{x_1}\big(\frac{1}{2}d^2(x_1,y) \big).
\end{equation*}
 This equation implies that $y$ must equal $\exp_{x_1}(\nabla _{x_1} c(x_1,...,x_m))$; uniqueness follows immediately.

\end{proof}

\begin{remark}\label{rm: nocut}
This lemma may be of independent interest.  The minimizing $y$ in \eqref{bccost} is known as the Frechet or Karcher mean of the points $x_1,x_2,...,x_m$, and can be interpreted as a generalization of the notion of average to a curved space.  Uniqueness of Frechet means is linked the curvature of the manifold $M$.  For example, Frechet means are easily seen to be unique on a Hadamard manifold, but may be non-unique for certain $x_i$ on spaces with positive sectoinal curvature; for example, every point on the equator is a Frechet mean of the north and south poles on the sphere.  The preceding lemma yields a sufficient condition for the Frechet mean of the points $x_1,x_2,...,x_m$ to be unique; namely, that the function $c$ be differentiable with respect to one of the variables.   We will see later that this in fact implies that, if $\mu_1$ does not charge small sets,  the Frechet mean of the points $(x_1,x_2,...,x_m)$ is unique $\gamma$ almost everywhere, where $\gamma$ is the optimal measure in \eqref{K}.
\end{remark}

\newtheorem{aediff}[nocut]{Lemma}
\begin{aediff}\label{aediff}
Let $(x_1,x_2,...,x_m) \in spt(\gamma)$, and suppose the potential $u_1$ is differentiable at $x_1$.  Then the cost $c$ is differentiable with respect to $x_1$ at $(x_1,x_2,...,x_m)$ and moreover,
\begin{align}\label{eq:grad u}
\nabla_{x_1} c (x_1, ... , x_m) =  \nabla u_1 (x_1) 
\end{align}
\end{aediff}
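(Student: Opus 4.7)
The plan is to pin $\nabla_{x_1} c(x_1,\ldots,x_m)$ down by squeezing $c$ between two bounds coming, respectively, from the dual optimality inequality and the superdifferentiability established in the preceding lemma.

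First, I would exploit the duality. Since $(u_1,\ldots,u_m)$ is $c$-conjugate, the compatibility inequality $\sum_i u_i(x_i') \leq c(x_1',x_2,\ldots,x_m)$ holds at every point, not just almost every point; and by Theorem~\ref{th:dual} equality holds at any $(x_1,\ldots,x_m)\in\mathrm{spt}(\gamma)$. Fixing $x_2,\ldots,x_m$ and varying only the first coordinate, the function
\begin{equation*}
x_1' \longmapsto c(x_1',x_2,\ldots,x_m) - u_1(x_1')
\end{equation*}
therefore attains a global minimum at $x_1'=x_1$. Consequently, for $v\in T_{x_1}M$ small, writing $x_1'=\exp_{x_1}v$ and using differentiability of $u_1$ at $x_1$, I obtain the lower bound
\begin{equation*}
c(\exp_{x_1}v,x_2,\ldots,x_m)-c(x_1,\ldots,x_m) \geq u_1(\exp_{x_1}v)-u_1(x_1) = g(\nabla u_1(x_1),v) + o(|v|).
\end{equation*}

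Second, I would apply the previous lemma on superdifferentiability of $c$ with respect to $x_1$: there exists some supergradient $p\in T_{x_1}M$ with
\begin{equation*}
c(\exp_{x_1}v,x_2,\ldots,x_m)-c(x_1,\ldots,x_m) \leq g(p,v)+o(|v|).
\end{equation*}
Combining the two inequalities yields $g(\nabla u_1(x_1)-p,v)\leq o(|v|)$ for all small $v$. Substituting $v = tw$ for a unit vector $w$, dividing by $t>0$, and letting $t\to 0^+$ gives $g(\nabla u_1(x_1)-p,w)\leq 0$; since $w$ is arbitrary, $p=\nabla u_1(x_1)$.

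With the two bounds matching, the sandwich forces
\begin{equation*}
c(\exp_{x_1}v,x_2,\ldots,x_m)-c(x_1,\ldots,x_m) = g(\nabla u_1(x_1),v) + o(|v|),
\end{equation*}
which is precisely the statement that $c$ is differentiable with respect to $x_1$ at $(x_1,\ldots,x_m)$, with gradient equal to $\nabla u_1(x_1)$. No step is particularly delicate; the only point that requires some care is verifying that the dual inequality holds \emph{pointwise} (so that the lower bound can be used at the off-support competitor $(\exp_{x_1}v,x_2,\ldots,x_m)$), and this follows automatically from the defining infimum in the $c$-conjugacy relation \eqref{eq:conj}.
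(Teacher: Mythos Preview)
Your proposal is correct and follows essentially the same line as the paper's proof: use duality (the inequality $\sum_i u_i(x_i)\le c$ with equality on $\mathrm{spt}(\gamma)$) to exhibit $\nabla u_1(x_1)$ as a subgradient of $x_1\mapsto c(x_1,x_2,\ldots,x_m)$, combine this with the supergradient $p$ furnished by the preceding superdifferentiability lemma, and conclude that $c$ is differentiable in $x_1$ with gradient $p=\nabla u_1(x_1)$. The paper states this argument more tersely (simply asserting ``$\nabla u_1(x_1)$ is a subgradient, $c$ has a supergradient, hence $c$ is differentiable''), while you spell out the sandwich and correctly note why $c$-conjugacy gives the dual inequality pointwise.
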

\begin{proof}
The proof is based on a now classical argument of McCann \cite{m3}.   By  the properties  \eqref{eq:compatibility} and \eqref{eq:conj}, Theorem~\ref{th:dual}, and the fact that $(x_1, ... , x_m) \in spt(\gamma)$, the gradient $q=\nabla u_1(x_1)$ serves as a subgradient for $c$ with respect to $x_1$. By lemma \ref{aediff}, $c$ has a supergradient, $p$, as well.  Therefore, we conclude that $c$ must be differentiable with respect to $x_1$, with gradient $p=q =\nabla u_1(x_1)$.
\end{proof}

Whenever the optimal $y$ in \eqref{bccost} is unique, we will denote it by $\overline{y}(x_1,...,x_m)$.
 The following proposition implies injectivity of this correspondence from the support of $\gamma$. 
\newtheorem{uniquex_i}[nocut]{Lemma}\begin{uniquex_i}\label{uniquex_i}
Suppose  $x=(x_1, \cdots, x_m)$ and $\bar x= (\bar x_1, \cdots ,\bar x_m)$ are both in spt($\gamma$) and there exists $y \in M$ which minimizes both $z \mapsto \sum_{i=1}^m d^2(x_i,z)$  and $z \mapsto \sum_{i=1}^m d^2(\bar x_i,z)$.  Then $x =\bar x$.
\end{uniquex_i}
\begin{proof}
Fix some $i$.  We will show  $x_i = \bar x_i$.

Define $x'= (x_1, \cdots, \bar x_i, \cdots  , x_k)$ and $\bar x'= (\bar x_1, \cdots, x_i, \cdots, \bar x_k)$.  Recalling that $c(x) = \inf_{z \in M} \sum \frac{d^2}{2}(x_i, z)$, we have
\begin{align}\label{mono}
c(x') + c(\bar x')  & \leq  \Big[\sum_{j \ne i} \frac{d^2}{2} (x_j, y)\Big] + \frac{d^2}{2} (\bar x_i, y)  + 
\Big[\sum_{j \ne i} \frac{d^2}{2} (\bar x_j, y) \Big] + \frac{d^2}{2} (x_i, y)\\\nonumber
&= c(x) +  c(\bar x),
\end{align}
where the last equality follows from rearranging the terms and using the assumption 
\begin{equation*}
y \in {\rm argmin}_{z \in M} \sum_{i=1}^m \frac{d^2}{2}(x_i,z) \bigcap {\rm argmin}_{z \in M}  \sum_{i=1}^m \frac{d^2}{2}(\bar x_i,z).
\end{equation*}

Also, the optimality of $\gamma$ implies a certain $c$-monotonicity property \cite{P}; as $x, \bar x \in spt \gamma$, this yields,
\begin{equation*}
c(x') + c(\bar x') \ge c(x) + c (\bar x).
\end{equation*}

Moreover, from \eqref{mono}, this implies that $c(x') =[\sum_{j \ne i} \frac{d^2}{2} (x_j, y)] + \frac{d^2}{2} (\bar x_i, y)$; that is,

\begin{equation*}
y \in {\rm argmin}_{z \in M} \Big [(\sum_{j\ne i}  \frac{d^2}{2} (x_j, z))  + \frac{d^2}{2} (\bar x_i, z)\Big]
\end{equation*}
 
The Riemannian distance is smooth away from the cut locus,  so, by Lemma \ref{nocut}, each $z \mapsto \frac{d^2}{2}(x_i,z)$  and each $z \mapsto \frac{d^2}{2}(\bar x_i,z)$ is differentiable at $z =y$.  Therefore, as $y \in {\rm argmin}_z [\sum_{j} \frac{d^2}{2} (x_j, z) ]$
 and $y \in {\rm argmin}_z \big[(\sum_{j\ne i} \frac{d^2}{2} (x_j, z) ) + \frac{d^2}{2} (\bar x_i, z)\big]$,
\begin{equation*}
\sum_j \nabla_y \frac{d^2}{2} (x_j , y) = 0  = \sum_{j\ne i} \nabla_y \frac{d^2}{2}  (x_j, y) + \nabla_y \frac{d^2}{2}  (\bar x_i, y).
\end{equation*}
This implies
\begin{equation*}
\nabla_y \frac{d^2}{2} (x_i, y) = \nabla_y \frac{d^2}{2} (\bar x_i, y)
\end{equation*}

It is well known that for $x \notin {\rm cut}(y)$, $x = \exp_y(\frac{1}{2}\nabla_y d^2 (x, y))$ and so the above implies

\begin{equation*}
x_i = \exp_y(\frac{1}{2}\nabla_y d^2 (x_i, y)) = \exp_y(\frac{1}{2}\nabla_y d^2 (\bar x_i, y))= \bar x_i.
\end{equation*}
as desired.
\end{proof}

\section{Main result}
We are now ready to prove Theorem \ref{main}.
\begin{proof}
We first consider the assertion that optimal measures must be concentrated on a graph.

For any optimal measure $\gamma$, we need to show that, for $\mu_1$ almost every $x_1$, there is a unique $(x_2,x_3,...,x_m)$ such that $(x_1,x_2,x_3,...,x_m)) \in spt(\gamma)$.
As the optimal measure $\gamma$ projects to $\mu_1$, for $\mu_1$ a.e. $x_1$, we must have \emph{at least one} $(x_2,x_3,...,x_m)$ such that $(x_1,x_2,x_3,...,x_m) \in spt(\gamma)$.  Therefore, it remain only to prove uniqueness of the $(x_2,x_3,...,x_m)$.

As mentioned in Section 2, it is well known that the Kantorovich potential $u_1$ is Lipschitz, and hence differentiable $\mu_1$ almost everywhere \cite{m3}.  Now, at every point where the potential is differentiable, the cost $c$ is differentiable with respect to $x_1$, by Lemma \ref{aediff}.   Thus from \eqref{eq:y} and \eqref{eq:grad u}, we see 
 that there is a unique $y$ minimizing \eqref{bccost}, for  all   
 $(x_1,x_2,x_3,...,x_m) \in spt(\gamma)$; namely  
 \begin{equation*}
y
= \exp_{x_1}(\nabla u(x_1)).
\end{equation*}
It follows from Lemma \ref{uniquex_i}, that for this $y$, there is {\em at most one} $$(x_1,x_2,...,x_m) \in spt(\gamma)$$ with $y \in {\rm argmin} \sum_{i}d^2(x_i,y)$,  showing the uniqueness of $(x_2, ..., x_m)$.  Denote this point $$G(y) := (G_1(y),G_2(y),....G_m(y)).$$  Then, denoting $F_i(x_i) = G_i(\exp_{x_1}(\nabla u(x_1)))$, $F_i$ is a well defined $\mu_1$-almost everywhere map with the desired properties.

It remains to prove uniqueness of the optimal measure $\gamma$ and the optimal maps $F_i$; our argument for this is standard in optimal transport theory.  Suppose there are two distinct optimizers, $\gamma$ and $\bar\gamma$; by the argument above, they are concentrated on graphs, $(F_2,F_3,...,F_m)$ and $(\bar F_2,\bar F_3,...,\bar F_m)$, respectively.  Now, by linearity of the Kantorovich functional, the interpolant $\frac{1}{2}\gamma + \frac{1}{2}\bar\gamma$ is also so optimal and must also be concentrated on a graph.  This immediately implies that $(F_2,F_3,...,F_m)=(\bar F_2,\bar F_3,...,\bar F_m)$ almost everywhere, completing the proof.
\end{proof}

\section{Barycenters in Wasserstein space on Riemannian manifolds}

In this section, we discuss the connection of our result with the Wasserstein barycenter of $\mu_1,...\mu_2$.  The barycenter (with equal weights) is defined as the Borel probability measure $\nu$ on $M$ which minimizes

\begin{equation}\label{matching}
\nu \mapsto \sum_{i=1}^m W^2_2(\mu_i, \nu),
\end{equation}
where $W^2_2(\mu_i, \nu)$ denotes the square of the quadratic Wasserstein distance from $\mu_i$ to $\nu$.    The relevance of the barycenter to the multi-marginal optimal mapping problem \eqref{M} was observed and investigated by Carlier and Ekeland \cite{CE} and by Agueh and Carlier \cite{ac}.
Existence of the barycenter follows easily from a continuity compactness argument (see \cite{CE}, Theorem 3).  When $\mu_1$ is absolutely continuous, uniqueness of the barycenter was shown in \cite{ac} for the Euclidean case $M\subseteq \mathbb{R}^n$, and can be easily generalized to the Riemannian case either from Theorem 3.4.1 in \cite{P5}, or alternatively, by adapting Proposition 4 in \cite{CE}, using McCann's theorem \cite{m3} in place of the twist, or generalized Spence-Mirrlees, condition. 

Letting $\gamma$ denote the optimal measure in the multi-marginal problem \eqref{K}, and $\bar y(x_1,x_2,...,x_m)$ the minimizer of $y \mapsto \sum_{i=1}^m d^2(x_i,y)$ (which, by Lemma \ref{uniquey}, is unique for $\gamma$ almost all $(x_1,x_2,...,x_m)$), a result of Carlier and Ekeland  \cite{CE} implies that

\begin{equation*}
\nu:=\overline{y}_{\#} \gamma
\end{equation*}
is the unique barycenter\footnote{Note that the minimizer of $y \mapsto \sum_{i=1}^m d^2(x_i,y)$ is not necessarily unique for \emph{all} $(x_1,x_2,...,x_m)$ here, as was assumed by Carlier and Ekeland in \cite{CE}.  However, it is clear by examining their proof that it is sufficient to have uniqueness for $\gamma$ almost all $(x_1,x_2,...,x_m)$.}.   They also showed that, for each $i$, the measure $\gamma_i$ defined on $M \times M$ by

\begin{equation*}
\gamma_i = (\pi_i, \bar y)_{\#}\gamma
\end{equation*}
is optimal for the two marginal Monge-Katorovich problem

\begin{equation}\label{twomarg}
\inf_{\gamma_i} \int_{M \times M}d^2(x_i,y) d\gamma_i
\end{equation}
with marginals $\mu_i$ and $\nu$.  Here $\pi_i(x_1,x_2,...,x_m) = x_i$ is the canonical projection.

The following proposition further highlighs the relationship between the barycenter and the optimal maps in Theorem \ref{main}.
\newtheorem{form}{Proposition}[section]\label{form}
\begin{form}\label{form}
Use the assumption and notation as in Theorem \ref{main}. Then, 
the mappings $F_i(x_1)$ in Theorem \ref{main} are of the form $G_i \circ G_1^{-1}$, where $G_1^{-1}$ is the optimal map (for the quadratic cost $d^2(x_i,y)$) pushing $\mu_1$ forward to the barycenter $\nu$, and $G_i$ is the optimal map pushing $\nu$ forward to the measure $\mu_i$.
\end{form}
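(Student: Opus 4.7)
My plan is to combine the Carlier--Ekeland results recalled earlier in this section with the graph structure of $\gamma$ established in Theorem \ref{main} and the injectivity statement of Lemma \ref{uniquex_i}.

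First I define $G_1^{-1}(x_1) := \exp_{x_1}(\nabla u_1(x_1))$. By the proof of Theorem \ref{main}, this coincides with $\bar{y}(x_1,x_2,\ldots,x_m)$ for $\gamma$-almost every $(x_1,\ldots,x_m)$, so $(G_1^{-1})_{\#}\mu_1 = \bar{y}_{\#}\gamma = \nu$. The Carlier--Ekeland result recalled above says that $\gamma_1 := (\pi_1,\bar{y})_{\#}\gamma$ is optimal for the two-marginal Monge--Kantorovich problem between $\mu_1$ and $\nu$ with cost $d^2/2$; this plan is concentrated on the graph of $G_1^{-1}$ by construction. Since $\mu_1$ is absolutely continuous, McCann's theorem yields uniqueness of the optimal map, so $G_1^{-1}$ is the McCann map pushing $\mu_1$ to $\nu$.

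Next I define the maps $G_i$ on the barycenter side. By Lemma \ref{uniquex_i}, for each $y$ in the set $\bar{y}(\mathrm{spt}(\gamma))$---which has full $\nu$-measure since $\nu = \bar{y}_{\#}\gamma$---there is a unique $(x_1,\ldots,x_m) \in \mathrm{spt}(\gamma)$ whose Frechet mean is $y$, and I let $G_i(y)$ be its $i$-th coordinate. Then $G_1$ is the $\nu$-a.e.\ inverse of $G_1^{-1}$, one has $(G_i)_{\#}\nu = (\pi_i)_{\#}\gamma = \mu_i$, and by construction $F_i(x_1) = G_i(G_1^{-1}(x_1))$ holds $\mu_1$-almost everywhere.

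It remains to check that $G_i$ is optimal from $\nu$ to $\mu_i$. The Carlier--Ekeland argument also gives that $\gamma_i := (\pi_i,\bar{y})_{\#}\gamma$ is optimal for the two-marginal problem between $\mu_i$ and $\nu$; since the cost $d^2/2$ is symmetric in its two arguments, the flipped plan $(\bar{y},\pi_i)_{\#}\gamma$ is optimal for the problem with marginals $\nu$ and $\mu_i$. By the previous paragraph this flipped plan is concentrated on the graph of $G_i$, so $G_i$ is an optimal map. The subtle point worth flagging is that absolute continuity of $\nu$ is not available, and so one cannot identify $G_i$ by applying McCann's theorem on the target side in the usual way; instead, Lemma \ref{uniquex_i}---the same ingredient driving Theorem \ref{main}---supplies the injectivity needed to read $G_i$ off the optimal plan directly and bypasses this difficulty.
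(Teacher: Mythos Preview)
Your argument is correct and proves the proposition as stated, but it proceeds along a somewhat different line than the paper's own proof. You lean entirely on the Carlier--Ekeland optimality of the two-marginal plans $\gamma_i=(\pi_i,\bar y)_{\#}\gamma$ together with Lemma \ref{uniquex_i}: for $G_1^{-1}$ you combine the optimality of $\gamma_1$ with McCann's uniqueness (legitimate since $\mu_1$ is absolutely continuous), and for $G_i$ you flip the optimal plan and use Lemma \ref{uniquex_i} to see it sits on a graph, bypassing any regularity of $\nu$. The paper instead works through the dual: it rewrites the $c$-conjugacy relation to show each $u_i$ is $\tfrac{d^2}{2}$-concave, builds the two-marginal potentials $(u_i,v_i)$ explicitly, and then uses Lemma \ref{uniquex_i} in a finer way---to show that the superdifferential $\partial u_i^c(y)$ has a single extremal point at $\nu$-a.e.\ $y$, hence $u_i^c$ is differentiable there. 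This extra work buys the explicit McCann-form $G_i(y)=\exp_y(\nabla u_i^c(y))$ and therefore the structural formula \eqref{structure} highlighted in the introduction, which your route does not directly deliver. Conversely, your approach is shorter and makes transparent that the only obstruction on the $\nu$-side is the graph property, which Lemma \ref{uniquex_i} provides outright.
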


\newtheorem{rem}[form]{Remark}
\begin{rem}
This generalizes the result of Gangbo and Swiech \cite{GS}, who showed that when $M \subseteq \mathbb{R}^n$, the optimal maps for \eqref{M} take the form $\nabla u_i^*\circ \nabla u_1$, for convex functions $u_i$ and their conjugates $u_i^*$.  Agueh and Carlier later realized that the maps $Du_i^*$ were in fact the Brenier (optimal) maps pushing the barycenter forward to $\mu_i$, and $Du_1$ the Brenier map pushing $\mu_1$ forward to the barycenter \cite{ac}.
\end{rem}

\begin{proof}
Letting $(u_1,u_2,...,u_m)$ be a c-conjugate solution to the dual problem \eqref{D}, we have

\begin{eqnarray}
u_i(x_i)& =& \inf_{x_j, j\neq i} \left[ c(x_1,x_2,...,x_m) -\sum_{i \neq j}u_j(x_j)\right]\\
&=& \inf_{x_j, j\neq i} \left[\left( \inf_y \sum_{k=1}^m \frac{d^2}{2}(x_k,y) \right)-\sum_{j \neq i}u_j(x_j)\right]\\
&=& \inf_y \left[ \frac{d^2}{2}(x_i,y) + \sum_{j \neq i}\inf_{x_j}\left[\frac{d^2}{2}(x_j,y)-u_j(x_j)\right]\right].\label{cconcave}
\end{eqnarray}
This implies that $u_i$ is $\frac{d^2}{2}$-concave.  Setting $v_i(y) = -\sum_{j \neq i}\inf_{x_j}\frac{d^2}{2}(x_j,y)-u_j(x_j)$, we have
\begin{equation}\label{twomargpotential}
u_i(x_i) + v_i(y) \leq \frac{d^2}{2}(x_i,y)
\end{equation}
and we have equality when 
\begin{equation*}
x \in S:=\{x=(x_1,...,x_m): c(x_1,x_2,...,x_m) -\sum_{j=1}^m u_j(x_j) \},
\end{equation*}
that is, $(u_i,v_i)$ solve the Kantorovich dual problem to \eqref{twomarg}.

Now, from the proof of Theorem \ref{main}, we have $F_i(x_1) = G_i(\exp_{x_1}(\nabla u_1(x_1))$ (for a.e $x_1$ thus, for $\mu_1$-a.e $x_1$ by our assumption that $\mu_1$ is absolutely continuous with respect to local coordinates). Also,  from \eqref{cconcave}, $u_1$ is $\frac{d^2}{2}$-concave, therefore by McCann's theorem the map $\exp_{x_1}(\nabla u_1(x_1))$ is optimal.  As for  $\mu_1$-a.e. $x_1$ we have equality in \eqref{twomargpotential} only when $y = \bar y(x_1,F_2(x_1),...,F_m(x_1))$, it is clear that $\exp_{x_1}(\nabla u_1(x_1))_{\#}\mu_1 = \bar y_{\#}\gamma =\nu$.

It remains to show that the maps $G_i$ are optimal maps pushing $\nu$ to $\mu_i$.   Let $u_i^c(y) :=\inf_{x_i \in M} \frac{d^2}{2}(x_i,y) -u_i(x_i)$ be the $\frac{d^2}{2}$ concave conjugate of $u_i$.  Note that we have $v_i(y) \leq u_i^c(y)$, and we have equality whenever there is some $x_i$ achieving equality in \eqref{twomargpotential}, which happens whenever $y=\bar y(x)$ for some $x \in S$. 

Standard arguments now imply that $G_i(y) = \exp_{y}(\nabla u_i^c(y))$ wherever $u_i^c$ is differentiable; we must show that this is $\nu$-almost everywhere.  Although the semi-concave function $u_i^c$ is differentiable almost everywhere with respect to local coordinates, the conclusion is not completely obvious, as we do not know that the measure $\nu$ does not charge small sets.
 (This latter fact is known for the Euclidean case, or equivalently for the Gangbo-\'Swi\c{e}ch cost  $\sum_{i \neq j} |x_i-x_j|^2$ , as was shown by Agueh and Carlier \cite{ac}.) This difficulty can easily be overcome thanks to  Lemma \ref{uniquex_i}. Details follow:

 Fix $y \in \bar y(S)$. (Notice that for $\nu$-almost everywhere points $y$,  we have $y\in \bar y(S)$.)
  Note that the c-concave function $u_i^c$ is superdifferentiable everywhere   (since the relevant domains, i.e. support of measures, etc,  are compact), and  that at any point $x_i$ where 
\begin{equation*}
u_i(x_i) +u_i^c(y) = \frac{d^2}{2}(x_i,y),
\end{equation*}
the vector $\nabla_{y} \frac{d^2}{2}(x_i,y)$ is in the superdifferential $\partial u_i^c(y)$.  Also,  as is well-known for concave and for $c$-concave functions, we see 
 that 
at any $x_i$, if   $\nabla_{y} \frac{d^2}{2}(x_i,y)$ is an \emph{extremal} point of the convex set  $\partial u_i^c(y)$, we must have 

\begin{equation*}
u_i(x_i) +u_i^c(y) = \frac{d^2}{2}(x_i,y).
\end{equation*}
Furthermore, as for $y \in \bar y(S)$ we have $v(y) = u_i^c(y)$, we have equality in \eqref{twomargpotential} for each such $x_i$.

In sum, for each $y \in \bar y(S)$
 we have shown that for each $x_i$, if  $\nabla_{y} \frac{d^2}{2}(x_i,y)$ is extremal in $ \partial u_i^c(y)$, then we have equality in \eqref{twomargpotential}. But by Lemma \ref{uniquex_i} there is exactly one point $x_i \in M$ for which equality holds in \eqref{twomargpotential}.
It follows that, at each point $y \in \bar y (S)$,  $\partial u_i^c(y)$ has only one extremal point, hence $u_i^c$ is differentiable on $\bar y (S)$.  Since $ y \in \bar y (S)$ for $\nu$-a.e. points $y$,   it now follows that 
\begin{equation*}
G_i(y) = \exp_y (\nabla u_i^c(y))
\end{equation*}
for $\nu$-a.e. $y$ and so that 
\begin{equation*}
F_i(x_1) = \exp_{\exp_{x_1}\nabla u_1(x_1)} (\nabla u_i^c(\exp_{x_1}\nabla u_1(x_1)))
\end{equation*}
for $\mu_1$-a.e. $x_1$ as desired. 
\end{proof}

\section{Extension to other functions of the distance}
In this section we consider the extension of our main theorem to non-quadratic cost functions.  Precisely, given $C^2$, strictly increasing, strictly convex functions $f_i:[0,\infty) \rightarrow \mathbb{R}$, for $i=1,2,...,m$,  we consider cost functions of the form

\begin{equation}\label{extcost}
c(x_1,x_2,...,x_m) =\inf_{y \in M} \sum_{i=1}^m f_i(d(x_i,y)).
\end{equation}
Note that the previous part of the paper deals with the case $f_i(t)=t^2$.

\newtheorem{extension}{Theorem}[section]
\begin{extension}\label{extension}
Assume $\mu_1$ is absolutely continuous with respect to local coordinates.  Then the solution $\gamma$ to \eqref{K} with cost function \eqref{extcost} is concentrated on the graph of a function $(F_2,F_3,...,F_m)$ over the first variable.  This function is a solution to Monge's problem \eqref{M}, and the solutions to both \eqref{K} and \eqref{M} are unique.
\end{extension}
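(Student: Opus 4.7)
The plan is to recycle the skeleton of the proof of Theorem \ref{main}, extending each of the preparatory lemmas in Section 3 to the cost \eqref{extcost} and then running the argument of Section 4 verbatim. Write $\Phi_{x_1,\ldots,x_m}(y)=\sum_{i=1}^m f_i(d(x_i,y))$, so that $c(x_1,\ldots,x_m)=\inf_y \Phi_{x_1,\ldots,x_m}(y)$. First I would prove the analogue of Lemma \ref{nocut}: at any minimizer $y$ of $\Phi$, one has $y\notin \mathrm{cut}(x_i)$ for every $i$. If $y\in \mathrm{cut}(x_i)$ then $d(x_i,y)>0$ and $d(x_i,\cdot)$ has a genuine first-order kink at $y$, namely there is a nonzero $u\in T_yM$ and $\alpha>0$ with $d(x_i,\exp_y u)+d(x_i,\exp_y(-u))-2d(x_i,y)\le -\alpha|u|$ for all small $|u|$. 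Since $f_i\in C^2$ and $f_i'(d(x_i,y))>0$, Taylor expansion promotes this to $f_i(d(x_i,\exp_y u))+f_i(d(x_i,\exp_y(-u)))-2f_i(d(x_i,y))\le -\tfrac{\alpha}{2}f_i'(d(x_i,y))|u|$ for small $|u|$. Each $j$ with $y\notin\mathrm{cut}(x_j)$ contributes at most $O(|u|^2)$ to the symmetric second difference of $f_j\circ d(x_j,\cdot)$, so for $|u|$ small enough one of $\Phi(\exp_y u)$, $\Phi(\exp_y(-u))$ is strictly smaller than $\Phi(y)$, contradicting minimality.

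Next I would extend Lemmas \ref{uniquey}--\ref{aediff}. Superdifferentiability of $c$ in $x_1$ is immediate: for any optimal $\bar y$, the function $x_1\mapsto f_1(d(x_1,\bar y))$ is $C^2$ near $x_1$ (by the preceding step $\bar y\notin\mathrm{cut}(x_1)$), upper-bounds $c$, and equals it at $x_1$. When $c$ is differentiable in $x_1$, the identity $\nabla_{x_1}c=f_1'(d(x_1,y))\nabla_{x_1}d(x_1,y)$ holds at every optimal $y$; since $|\nabla_{x_1}d(x_1,y)|=1$ and $f_1'$ is strictly increasing (by strict convexity of $f_1$), the magnitude $|\nabla_{x_1}c|$ determines $d(x_1,y)$ uniquely and the direction of $\nabla_{x_1}c$ then determines $\exp_{x_1}^{-1}(y)$, yielding both uniqueness of the minimizer and an explicit recovery formula. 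The analogue of Lemma \ref{aediff} is then the same super-equals-sub argument with $\nabla u_1(x_1)$ as the subgradient along $\mathrm{spt}(\gamma)$.

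The third ingredient is the injectivity Lemma \ref{uniquex_i}, where strict convexity of each $f_i$ is used most essentially. The swap $x'=(x_1,\ldots,\bar x_i,\ldots,x_m)$, $\bar x'=(\bar x_1,\ldots,x_i,\ldots,\bar x_m)$ combined with the common minimizer $y$ gives $c(x')+c(\bar x')\le c(x)+c(\bar x)$, and $c$-cyclical monotonicity forces equality and the stronger fact that $y$ minimizes the swapped sum as well. First-order conditions at $y$ then yield $f_i'(d(x_i,y))\nabla_y d(x_i,y)=f_i'(d(\bar x_i,y))\nabla_y d(\bar x_i,y)$; taking norms (using $|\nabla_y d|=1$, available since $y\notin\mathrm{cut}(x_i)\cup\mathrm{cut}(\bar x_i)$ by the nocut lemma) and invoking strict monotonicity of $f_i'$ gives $d(x_i,y)=d(\bar x_i,y)$, whereupon equality of the full vectors gives $\exp_y^{-1}(x_i)=\exp_y^{-1}(\bar x_i)$ and hence $x_i=\bar x_i$. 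With these three extensions in hand, the proof of Theorem \ref{main} transcribes word for word.

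I expect the hardest step to be the refined nocut estimate above. The quadratic argument of Cordero-Erausquin gives a clean second-order semiconvexity bound that dominates an opposing second-order non-semiconvexity bound with a large prefactor, whereas for general $f_i$ the natural non-semiconvexity estimate is only first-order. Making the dominance sharp requires exploiting the strict positivity $f_i'(d(x_i,y))>0$ at cut points together with the $C^2$ control on $f_i$ away from the kink, and checking that the degenerate case $d(x_i,y)=0$ cannot produce a cut point in the first place.
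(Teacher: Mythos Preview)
Your overall architecture matches the paper's, and your extensions of Lemmas \ref{uniquey}--\ref{uniquex_i} via the norm/direction splitting through strict monotonicity of $f_i'$ are correct. The gap is in your no-cut step. The first-order kink
\[
d(x_i,\exp_y u)+d(x_i,\exp_y(-u))-2d(x_i,y)\le -\alpha|u|
\]
is only available when $y$ is joined to $x_i$ by at least two distinct minimizing geodesics; at a \emph{purely conjugate} cut point (one minimizing geodesic, but $y$ conjugate to $x_i$ along it) the function $d(x_i,\cdot)$ is differentiable at $y$, so this symmetric difference is only $o(|u|)$ and cannot dominate the $O(|u|^2)$ contributions from the other indices. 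You also supply no upper bound for indices $j\neq i$ with $y\in\mathrm{cut}(x_j)$.

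The paper repairs this by working with $d^2$ rather than $d$. Proposition~2.5 in \cite{c-ems} (already used in Lemma \ref{nocut}) yields, at \emph{every} cut point and for arbitrarily large $A$, a direction $u$ with $d^2(x_i,\exp_y u)+d^2(x_i,\exp_y(-u))-2d^2(x_i,y)\le -A|u|^2$, while Lemma~3.12 of the same reference gives a uniform upper bound $K|u|^2$ for every $j$, cut locus or not. Taylor-expanding $f_i$ about $d_0:=d(x_i,y)$ via
\[
f_i(d)-f_i(d_0)=\frac{f_i'(d_0)}{d+d_0}\,(d^2-d_0^2)+\tfrac{1}{2}f_i''(\xi)(d-d_0)^2,
\]
together with $f_i'(d_0)>0$ and $f_i''\ge 0$, transfers both the arbitrarily-negative bound and the uniform upper bound from $d^2$ to $f_i\circ d$ at second order; one then chooses $A$ large and argues as in Lemma \ref{nocut}. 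With the no-cut lemma fixed in this way, the rest of your outline goes through unchanged.
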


\begin{proof}
The proof is very similar to the proof of Theorem \ref{main}, and we only sketch it here, explaining how to deal with the main differences.  The main difference is to verify that a minimizer $y \in {\rm argmin}\sum_{i=1}^m f_i(d(x_i,y))$ is not in the cutlocus of any $x_i$ (an analogue of Lemma \ref{nocut}).  This follows from a Taylor expansion. Details are given below:

Note that, by Taylor's theorem, for some $p \in [d(x_i, \exp_{y}(u)), d(x_i, y)]$

\begin{eqnarray}
& &f_i(d(x_i, \exp_{y}(u))) -f_i(d(x_i,y))\\& =& f_i'(d(x_i,y))\big( d(x_i, \exp_{y}(u)) -d(x_i,y) \big)\nonumber\\ 
& & + \ f_i''(p)\frac{( d(x_i, \exp_{y}(u)) -d(x_i,y) \big)^2}{2}\nonumber\\
&=& \frac{f_i'(d(x_i,y))}{d(x_i, \exp_{y}(u)) +d(x_i,y)}\big( d^2(x_i, \exp_{y}(u)) -d^2(x_i,y) \big)\nonumber\\
 && + \ f_i''(p)\frac{( d(x_i, \exp_{y}(u)) -d(x_i,y) \big)^2}{2}.\label{posu}
\end{eqnarray}

Similarly, for some $q \in [d(x_i, \exp_{y}(-u)), d(x_i, y)]$
\begin{eqnarray}
& & f_i(d(x_i, \exp_{y}(-u))) -f_i(d(x_i,y))\\
&=& \frac{f_i'(d(x_i,y))}{d(x_i, \exp_{y}(-u)) +d(x_i,y)}\big( d^2(x_i, \exp_{y}(-u)) -d^2(x_i,y) \big)\nonumber \\
 & &  \ + \ f_i''(q)\frac{( d(x_i, \exp_{y}(-u)) -d(x_i,y) \big)^2}{2}.\label{negu}
\end{eqnarray}
  By monotonicity of $f_i$, we have $f_i'(d(x_i,y)) >0$ and by convexity we have $f_i''(p), f_i''(q) \geq 0$.   Also, we have  (e.g. see \cite{m3}), 
\begin{equation*}
( d(x_i, \exp_{y}(-u)) -d(x_i,y) \big)^2 \leq C |u|^2 +o(|u|).
\end{equation*} 
 Note that near the cutlocus, the factor $d(x_i, \exp_{y}(u)) +d(x_i,y)$ is bounded below by $I$, the injectivity radius of the compact manifold $M$ (away from the cutlocus, inequality \eqref{uppbound} below follows easily from smoothness).  Using Lemma 3.12 in \cite{c-ems}, and combining \eqref{posu} and \eqref{negu}, we obtain

\begin{equation}\label{uppbound}
 f_i(d(x_i, \exp_{y}(u)))  + f_i(d(x_i, \exp_{y}(-u))) -2f_i(d(x_i,y)) \leq K |u|^2 + o(|u|)^2
\end{equation}
for some $K>0$.
Now, if $y \in {\rm cut}(x_j)$, we can, by Proposition 2.5 in \cite{c-ems}, for any $A>0$, choose a $u$ such that  
\begin{equation} \label{d2bound}
d^2(x_j, \exp_yu) + d^2(x_j, \exp_y(-u)) -2d^2(x_i,y)\leq -A|u|^2.
\end{equation}

Now, the factor $d(x_i, \exp_{y}(u)) +d(x_i,y)$ is bounded above by $2R$, where $R$ is the radius of the compact manifold.  Therefore, for any $B>0$, combining \eqref{posu} and \eqref{negu} and \eqref{d2bound}, we can find arbitrarily small $u$ such that
\begin{equation*}
 f_i(d(x_j, \exp_{y}(u)))  + f_i(d(x_j, \exp_{y}(-u))) -2f_i(d(x_j,y)) \leq -B|u|^2 + o(|u|)^2.
\end{equation*}
Now, choosing $B$ large enough, and arguing as in Lemma \ref{nocut}, we obtain either 

\begin{equation*}
 \sum_{i=1}^m f_i(d(x_i, \exp_{y}(u))) < \sum_{i=1}^mf_i(d(x_i,y))
\end{equation*}
or

\begin{equation*}
 \sum_{i=1}^m f_i(d(x_i, \exp_{y}(-u)))< \sum_{i=1}^mf_i(d(x_i,y)).
\end{equation*}

In either case, $y \notin {\rm argmin} \sum_{i=1}^m f_i(d(x_i,y))$.  The remainder of the proof, including analogues of Lemmas \ref{uniquey} and \ref{uniquex_i}, is very similar to the arguments developed earlier in the paper and is omitted.
\end{proof}
\bibliographystyle{plain}
\bibliography{biblio}

\begin{thebibliography}{10}

\bibitem{ac}
M.~Agueh and G.~Carlier.
\newblock {Barycenters in the Wasserstein space}.
\newblock {\em SIAM J. Math. Anal.}, 43(2):904--924, 2011.

\bibitem{bren}
Y.~Brenier.
\newblock Decomposition polaire et rearrangement monotone des champs de
  vecteurs.
\newblock {\em C.R. Acad. Sci. Pair. Ser. I Math.}, 305:805--808, 1987.

\bibitem{bdpgg}
Giuseppe Buttazzo, Luigi De~Pascale, and Paola Gori-Giorgi.
\newblock Optimal-transport formulation of electronic density-functional
  theory.
\newblock {\em Phys. Rev. A}, 85:062502, Jun 2012.

\bibitem{C}
G.~Carlier.
\newblock On a class of multidimensional optimal transportation problems.
\newblock {\em J. Convex Anal.}, 10(2):517--529, 2003.

\bibitem{CE}
G.~Carlier and I.~Ekeland.
\newblock Matching for teams.
\newblock {\em Econom. Theory}, 42(2):397--418, 2010.

\bibitem{CN}
G.~Carlier and B.~Nazaret.
\newblock Optimal transportation for the determinant.
\newblock {\em ESAIM Control Optim. Calc. Var.}, 14(4):678--698, 2008.

\bibitem{CMN}
P-A. Chiapporri, R.~McCann, and L.~Nesheim.
\newblock Hedonic price equilibria, stable matching and optimal transport;
  equivalence, topology and uniqueness.
\newblock {\em Econom. Theory.}, 42(2):317--354, 2010.

\bibitem{c-ems}
D.~Cordero-Erausquin, R.~J. McCann, and M.~Schmuckenschl\"ager.
\newblock { A Riemannian interpolation inequality a la Borell, Brascamp and
  Lieb}.
\newblock {\em Invent. Math.}, 146(2):219--257, 2001.

\bibitem{CFK}
C.~Cotar, G.~Friesecke, and C.~Kl\"{u}ppelberg.
\newblock {Density functional theory and optimal transportation with Coulomb
  cost}.
\newblock Preprint available at arXiv:1104.0603.

\bibitem{GG}
A.~Galichon and N.~Ghoussoub.
\newblock {Variational representations for N-cyclically monotone vector
  fields}.
\newblock Preprint.

\bibitem{G}
W.~Gangbo.
\newblock Habilitation thesis, Universite de Metz, available at
  http://people.math.gatech.edu/~gangbo/publications/habilitation.pdf, 1995.

\bibitem{GM}
W.~Gangbo and R.~McCann.
\newblock The geometry of optimal transportation.
\newblock {\em Acta Math.}, 177:113--161, 1996.

\bibitem{GS}
W.~Gangbo and A.~\'Swi\c{e}ch.
\newblock Optimal maps for the multidimensional monge-kantorovich problem.
\newblock {\em Comm. Pure Appl. Math.}, 51(1):23--45, 1998.

\bibitem{GhMa}
N.~Ghoussoub and B~Maurey.
\newblock Remarks on multi-marginal symmetric monge-kantorovich problems.
\newblock to appear in \textit{Discrete and Continuous Dynamical Systems-A,
  special issue on "Optimal Transport and Applications " (2012)}.

\bibitem{GhM}
N.~Ghoussoub and A.~Moameni.
\newblock Symmetric monge-kantorovich problems and polar decompositions of
  vector fields.
\newblock Preprint, (February 10, 2013) 23pp.

\bibitem{GhP1}
N.~Ghoussoub and B.~Pass.
\newblock Decoupling of degiorgi-type systems via multi-marginal optimal
  transport.
\newblock Preprint, (February 13, 2013) 9pp.

\bibitem{lev}
V.~Levin.
\newblock {Abstract cyclical monotonicity and Monge solutions for the general
  Monge-Kantorovich problem}.
\newblock {\em Set-Valued Analysis}, 7(1):7--32, 1999.

\bibitem{m3}
R.~McCann.
\newblock { Polar factorization of maps on Riemannian manifolds}.
\newblock {\em Geom. Funct. Anal.}, 11:589--608, 2001.

\bibitem{P9}
B.~Pass.
\newblock {Multi-marginal optimal transport and multi-agent matching problems:
  uniqueness and structure of solutions}.
\newblock Preprint. Currently available at arXiv:1210.7372.

\bibitem{P1}
B.~Pass.
\newblock Uniqueness and monge solutions in the multimarginal optimal
  transportation problem.
\newblock {\em SIAM Journal on Mathematical Analysis}, 43(6):2758--2775, 2011.

\bibitem{P}
B.~Pass.
\newblock On the local structure of optimal measures in the multi-marginal
  optimal transportation problem.
\newblock {\em Calculus of Variations and Partial Differential Equations},
  43:529--536, 2012.
\newblock 10.1007/s00526-011-0421-z.

\bibitem{P5}
B.~Pass.
\newblock Optimal transportation with infinitely many marginals.
\newblock {\em Journal of Functional Analysis}, 2012.
\newblock 10.1016/j.jfa.2012.12.002.

\end{thebibliography}

\end{document}